\definecolor{blau}{rgb}{0,0,0.75} 
\newtheorem{theorem}{Theorem}
\newtheorem{lemma}{Lemma}
\newtheorem{defi}{Definition}
\theoremstyle{definition}
\newtheorem{remark}{Remark}
\newcommand{\fallfak}[2]{\ensuremath{#1^{\underline{#2}}}}
\newcommand{\Stir}[2]{\genfrac{ \{ }{ \} }{0pt}{}{#1}{#2}}
\newcommand{\N}{\ensuremath{\mathbb{N}}}
\newcommand{\R}{\ensuremath{\mathbb{R}}}
\DeclareMathOperator{\MPo}{\text{MPo}}
\DeclareMathOperator{\Po}{\text{Po}}
\newcommand{\law}{\ensuremath{\stackrel{\mathcal{L}}=}}
\newcommand{\claw}{\ensuremath{\xrightarrow{\mathcal{L}}}}
\newcommand{\plaw}{\ensuremath{\xrightarrow{p}}}
\def\P{{\mathbb {P}}}
\def\E{{\mathbb {E}}}
\def\V{{\mathbb {V}}}
\author[M.~Kuba]{Markus Kuba}
\address{Markus Kuba\\
Institute of Applied Mathematics and Natural Sciences\\
University of Applied Sciences - Technikum Wien\\
H\"ochst\"adtplatz 5, 1200 Wien} %
\email{kuba@technikum-wien.at}
\title[A note on mixed Poisson distributions]{A note on mixed Poisson distributions}
\keywords{Mixed Poisson distribution, limit law}%
\subjclass[2000]{60C05} %
\begin{document}

\begin{abstract}
In this note we discuss additional properties of mixed Poisson distributions. We discuss the convergence of mixed Poisson distributions
to its mixing distribution for the scaling parameter tending to infinity. Moreover, we obtain a central limit theorem after centering by its mixing random variable, together with moment convergence.
\end{abstract}

\date{06.02.2025}
\maketitle

\section{Introduction}
Mixed Poisson distributions are of great importance in actuarial mathematics and insurance
modelling and were first introduced by Dubourdieu~\cite{Dubourdieu1939}. 
Subsequently, they were studied further in several works~\cite{AdellCal1993,FLT2004,Grandell1997,Willmot}; in particular, 
we mention a discussion of their unimodality properties~\cite{MasseTheodorescu2005}, 
their tail behavior exploring the connection of such distributions to bacteriology~\cite{Neyman1939}, the analysis of
some point processes, expectation-maximization algorithms, as well as their role in analytic combinatorics, the analysis of random structures and discrete probability~\cite{BanderierKubaWagnerWallner2024,BKW2022,Stufler2022}. We especially point the reader to the surveys~\cite{Karlis},~\cite{KuPa2014}, the article~\cite{Willmot}, containing many additional references, as well as the books~\cite{Grandell1997,JohnsonKotz}. In this note we elaborate on the previous work of Panholzer and the author~\cite{KuPa2014}, discussing a simple limit theorem leading to mixed Poisson distributions. We strengthen the convergence of mixed Poisson distributions to its mixing distribution when the scaling parameter tends to infinity, adding convergence in probability. Moreover, we also establish a central limit theorem for the mixed Poisson distribution, similar to the classical central limit theorem for the ordinary Poisson distributions, with its rate parameter tending to infinity. First, we recall the definition of mixed Poisson distributions.
\begin{defi}
Given a nonnegative random variable $X$ with cumulative distribution function $F$.
We say that the discrete random variable $Y$ has a \emph{mixed Poisson distribution with mixing distribution} $X$ and \emph{scale parameter} $\rho \geq 0$, 
if its probability mass function is given for integer $\ell\ge 0$ by 
\begin{equation*}
\P\{Y=\ell\}=\frac{\rho^\ell}{\ell!}\int_{\R^{+}} X^{\ell}e^{-\rho X} dF= \frac{\rho^{\ell}}{\ell!}\E( X^{\ell}e^{-\rho X}).
\end{equation*}
This is summarized by the notation $Y\law \MPo(\rho F)$, or emphasizing the particular random variable $X$ by the notation 
$Y\law \MPo(\rho X)$.
\end{defi}
For degenerate mixing distribution $X=1$, the mixed Poisson distribution reduces to the ordinary Poisson distribution with rate parameter $\rho$, $\MPo(\rho)\law\Po(\rho)$. Similarly, given $X=x$ the distribution of $P=Y|\{X=x\}$ also reduces to an ordinary Poisson distribution: $P \law \Po(\rho x)$. Basic properties of mixed Poisson distribution include identities for their moments~\cite{AdellCal1993,Grandell1997,JohnsonKotz,KuPa2014}, connecting the raw moments and the factorial moments in terms of the Stirling numbers of the second kind $\Stir{s}{j}$,
\[
\E(\fallfak{Y}{s})=\rho^s\E(X^s),\quad
\E(Y^s)=\sum_{j=0}^{s}\Stir{s}{j}\rho^j\E(X^j), \quad s\ge 1. 
\]
Here, $\fallfak{x}{s}=x\cdot (x-1)\dots (x-s+1)$ denote the falling factorials, with $\fallfak{x}0=1$ and $\fallfak{x}{1}=x$.

\section{Mixed Poisson distributions and limit laws}
The following result of Panholzer and the author~\cite{KuPa2014} determines the limit law
of a sequence of random variables $(X_n)_{n\in\N}$. The limit law depends on the behavior of the factorial moments $\fallfak{X_n}{s}$, with integer $s\ge 1$. This contrasts the ordinary method of moments where the raw moments $X_n^s$, integer $s\ge 1$ are usually used. 

\begin{lemma}[Factorial moments and limit laws of mixed Poisson type~\cite{KuPa2014}]
\label{MOMSEQMainLemma}
Let $(X_n)_{n\in\N}$ denote a sequence of random variables, whose factorial moments are asymptotically of mixed Poisson type satisfying for $n$ tending to infinity the asymptotic expansion
\[
\E(\fallfak{X_n}s)=\lambda_n^s \cdot \mu_s\cdot (1 + o(1)),\quad s\ge 1,
\]
with $\mu_s\ge 0$, and $\lambda_n>0$. Furthermore assume that the moment sequence $(\mu_s)_{s\in\N}$ determines a unique distribution 
$X$ satisfying Carleman's condition. Then, the following limit distribution results hold:
\begin{itemize}
\item[(i)] if $\lambda_n\to\infty$, for $n\to\infty$, the random variable $\frac{X_n}{\lambda_n}$ converges in distribution, with convergence of all moments, to $X$. 

\item[(ii)] if $\lambda_n\to\rho \in (0,\infty)$, for $n\to\infty$, the random variable $X_n$ converges in distribution, with convergence of all moments, to a mixed Poisson distributed random variable $Y\law \MPo(\rho X)$.
\end{itemize}
Moreover, the random variable $Y\law \MPo(\rho X)$ converges for $\rho\to\infty$, after scaling, to its mixing distribution $X$: $\frac{Y}{\rho}\claw X$, 
with convergence of all moments.
\end{lemma}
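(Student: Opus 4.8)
The plan is to prove all three assertions with one device: the method of moments, applied through the Stirling-number change of basis between raw and factorial moments. The only input beyond routine computation is an extra moment-determinacy check needed for part (ii). The algebraic backbone is the identity $x^s=\sum_{j=0}^{s}\Stir{s}{j}\fallfak{x}{j}$, which is exactly the relation underlying the raw-moment formula $\E(Y^s)=\sum_{j=0}^{s}\Stir{s}{j}\rho^j\E(X^j)$ quoted in the introduction. Taking expectations, the raw moments of any nonnegative random variable are nonnegative Stirling combinations of its factorial moments, and I will repeatedly read this identity in both directions. Throughout I use that the moment sequence $(\mu_s)$ is, by hypothesis, the moment sequence of $X$, i.e.\ $\mu_s=\E(X^s)$, and that it satisfies Carleman's condition, so that the Fr\'echet--Shohat theorem converts convergence of moments into convergence in distribution whenever the limiting law is determinate.

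For part (i) I would compute, using the hypothesis $\E(\fallfak{X_n}{s})=\lambda_n^s\mu_s(1+o(1))$, the raw moments of the scaled variable: $\E\bigl((X_n/\lambda_n)^s\bigr)=\lambda_n^{-s}\sum_{j=0}^{s}\Stir{s}{j}\E(\fallfak{X_n}{j})=\sum_{j=0}^{s}\Stir{s}{j}\lambda_n^{\,j-s}\mu_j(1+o(1))$. Since $\lambda_n\to\infty$, every term with $j<s$ carries a negative power of $\lambda_n$ and vanishes, while the top term $j=s$ contributes $\Stir{s}{s}\mu_s=\mu_s$. Hence each raw moment of $X_n/\lambda_n$ converges to $\mu_s=\E(X^s)$. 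As the limiting law $X$ is moment-determinate by Carleman's condition, the method of moments yields $X_n/\lambda_n\claw X$ with convergence of all moments.

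For part (ii), with $\lambda_n\to\rho\in(0,\infty)$, the same expansion gives $\E(\fallfak{X_n}{s})\to\rho^s\mu_s=\E(\fallfak{Y}{s})$ and hence $\E(X_n^s)=\sum_{j=0}^{s}\Stir{s}{j}\E(\fallfak{X_n}{j})\to\sum_{j=0}^{s}\Stir{s}{j}\rho^j\mu_j=\E(Y^s)$, matching exactly the moments of $Y\law\MPo(\rho X)$. The main obstacle appears here: to upgrade moment convergence to convergence in distribution I must verify that the limit $Y$ is itself determined by its moments. I would transfer Carleman's condition from $X$ to $Y$ directly. Moments are log-convex, so with $\mu_0=1$ one has $\mu_j\le\mu_{2s}^{\,j/(2s)}$ for $0\le j\le 2s$, whence $\E(Y^{2s})=\sum_{j}\Stir{2s}{j}\rho^j\mu_j\le T_{2s}\bigl(\rho\,\mu_{2s}^{1/(2s)}\bigr)$, the Touchard polynomial $T_n(z)=\sum_j\Stir{n}{j}z^j$ evaluated at $z=\rho\,\mu_{2s}^{1/(2s)}$. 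The known growth of the Touchard/Bell polynomials bounds $\E(Y^{2s})^{1/(2s)}$ by a constant multiple of $\mu_{2s}^{1/(2s)}$, up to subexponential factors in $s$ that do not affect the divergence of $\sum_s\E(Y^{2s})^{-1/(2s)}$. Thus Carleman for $X$ forces Carleman for $Y$, and the Fr\'echet--Shohat theorem delivers $X_n\claw Y$ with convergence of all moments.

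Finally, the ``Moreover'' assertion is structurally identical to part (i). Using the raw-moment formula for $Y\law\MPo(\rho X)$, I get $\E\bigl((Y/\rho)^s\bigr)=\rho^{-s}\sum_{j=0}^{s}\Stir{s}{j}\rho^j\mu_j=\sum_{j=0}^{s}\Stir{s}{j}\rho^{\,j-s}\mu_j\to\mu_s$ as $\rho\to\infty$, the non-top terms dying through their negative powers of $\rho$ and the $j=s$ term surviving as $\mu_s$. Since the limiting law is again the moment-determinate $X$, the method of moments yields $Y/\rho\claw X$ with convergence of all moments, completing the proof.
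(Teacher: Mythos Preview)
Your treatment of the ``Moreover'' clause is exactly what the paper does: in the proof of Theorem~\ref{the1} the authors write
\[
\frac{1}{\rho^{s}}\E(Y^{s})=\E(X^{s})+\sum_{j=0}^{s-1}\Stir{s}{j}\frac{\E(X^{j})}{\rho^{s-j}}
\]
and let $\rho\to\infty$, then invoke Fr\'echet--Shohat. Parts~(i) and~(ii) are not proved in this paper at all; they are quoted from~\cite{KuPa2014}. Your proof of~(i) via the Stirling change of basis is the standard one and is fine.

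For part~(ii) the structure is right, but the one nontrivial step --- showing that $Y\law\MPo(\rho X)$ inherits moment determinacy from $X$ --- is asserted rather than proved. From $\E(Y^{2s})\le T_{2s}\bigl(\rho\,\mu_{2s}^{1/(2s)}\bigr)$ you claim the Touchard contribution is a ``subexponential factor that does not affect divergence.'' That is not enough: for the Touchard polynomials one has $T_{2s}(z)^{1/(2s)}\asymp \max\bigl(z,\,2s/\log(2s)\bigr)$, so in the regime $\mu_{2s}^{1/(2s)}=O(s)$ your bound on $\E(Y^{2s})^{1/(2s)}$ picks up an extra factor of order $s/\log s$, and multiplying $\mu_{2s}^{1/(2s)}$ by such a factor can certainly turn a divergent Carleman series $\sum_{s}\mu_{2s}^{-1/(2s)}$ into a convergent one (take $\mu_{2s}^{1/(2s)}\sim s$, where Carleman for $X$ holds but $\sum_{s}(s\cdot s/\log s)^{-1}<\infty$). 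So as written this is a gap; you need either a sharper estimate exploiting the log-convexity of $(\mu_{s})$ more carefully, or an independent determinacy argument for $\MPo(\rho X)$. The paper sidesteps the issue by citing the earlier reference.
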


The proof of the final statement was left to the reader in~\cite{KuPa2014}.
Here, we elaborate on the proof and extend this statement in the following two theorems. 

\begin{theorem}
\label{the1}
Assume that the moment sequence $(\mu_s)_{s\in\N}$ determines a unique distribution. Given a random variable $X$, with $\E(X^s)=\mu_s$, $s\in\N$ and a mixed Poisson distributed random variable $Y\law \MPo(\rho X)$. The r.v. $Y$ converges for $\rho\to\infty$, after scaling, to $X$: $\frac{Y}{\rho}\claw X$, with convergence of all moments. Moreover, we also have convergence in probability:
\[
\frac{Y}{\rho}\plaw X.
\]
Furthermore, the random variable $Y\law \MPo(\rho X)$, scaled and centered by the random variable $X$ used as its mixing distribution, converges in distribution
to a normal distribution
\[
\frac{\frac{Y}{\rho}-X}{1/\sqrt{\rho}} = \sqrt{\rho}\cdot\big(\frac{Y}{\rho}-X\big) \claw \mathcal{N}(0,X),
\]
where $\mathcal{N}(0,\sigma^2)$ denote a normal random variable with mean zero and variance $\sigma^2$. 
\end{theorem}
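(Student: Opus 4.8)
The plan is to condition on the mixing variable and exploit the classical central limit theorem for the ordinary Poisson distribution.  Recall that given $\{X=x\}$ the random variable $Y$ is exactly $\Po(\rho x)$ distributed, and that a $\Po(\lambda)$ random variable $P$ satisfies $(P-\lambda)/\sqrt{\lambda}\claw\mathcal{N}(0,1)$ as $\lambda\to\infty$.  Hence, conditionally on $\{X=x\}$ with $x>0$, we have
\[
\sqrt{\rho}\Big(\frac{Y}{\rho}-x\Big)=\frac{Y-\rho x}{\sqrt{\rho}}=\sqrt{x}\cdot\frac{Y-\rho x}{\sqrt{\rho x}}\claw \sqrt{x}\cdot\mathcal{N}(0,1)\law\mathcal{N}(0,x),
\]
as $\rho\to\infty$ (on $\{X=0\}$ the left-hand side is identically $0$, which is consistent with $\mathcal{N}(0,0)=0$).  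First I would make this precise at the level of characteristic functions: writing $\psi_\rho(t)=\E\big(e^{it\sqrt{\rho}(Y/\rho-X)}\big)$ and conditioning on $X$ gives
\[
\psi_\rho(t)=\E\Big(g_\rho(t,X)\Big),\qquad g_\rho(t,x)=\E\big(e^{it(Y-\rho x)/\sqrt{\rho}}\mid X=x\big)=\exp\!\Big(\rho x\big(e^{it/\sqrt{\rho}}-1\big)-it\sqrt{\rho}\,x\Big),
\]
using the known characteristic function of the Poisson law.  A Taylor expansion of $e^{it/\sqrt{\rho}}-1$ shows that $g_\rho(t,x)\to e^{-t^2x/2}$ pointwise in $x$ as $\rho\to\infty$, and moreover $|g_\rho(t,x)|=\exp\big(\rho x(\cos(t/\sqrt{\rho})-1)\big)\le 1$, so dominated convergence yields $\psi_\rho(t)\to\E\big(e^{-t^2X/2}\big)$, which is precisely the characteristic function of the $\mathcal{N}(0,X)$ mixture (a normal variance mixture with mixing variable $X$).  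By L\'evy's continuity theorem this establishes the claimed convergence in distribution.

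For the convergence in probability statement $Y/\rho\plaw X$, I would argue directly from the conditional variance: since $\E(Y\mid X)=\rho X$ and $\mathbb{V}(Y\mid X)=\rho X$, we get for any $\varepsilon>0$, using the conditional Chebyshev inequality and then taking expectations,
\[
\P\Big\{\Big|\frac{Y}{\rho}-X\Big|>\varepsilon\Big\}=\E\Big(\P\big\{|Y-\rho X|>\varepsilon\rho\mid X\big\}\Big)\le \E\Big(\frac{\rho X}{\varepsilon^2\rho^2}\Big)=\frac{\E(X)}{\varepsilon^2\rho}\longrightarrow 0,
\]
provided $\E(X)<\infty$, which holds here since the moment sequence $(\mu_s)$ is finite.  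The convergence in distribution $\frac{Y}{\rho}\claw X$ then follows a fortiori from convergence in probability, and the convergence of all moments is already contained in Lemma~\ref{MOMSEQMainLemma}; alternatively it follows from the moment identity $\E\big((Y/\rho)^s\big)=\rho^{-s}\sum_{j=0}^{s}\Stir{s}{j}\rho^j\E(X^j)=\E(X^s)+\Gro(1/\rho)$ recorded in the introduction.

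The main obstacle, and the only place where care is genuinely needed, is the uniform control required to pass from the conditional (pointwise-in-$x$) Poisson CLT to the unconditional statement.  The clean way around this is precisely the characteristic-function computation above, where the explicit formula $g_\rho(t,x)=\exp\big(\rho x(e^{it/\sqrt\rho}-1)-it\sqrt\rho\, x\big)$ is available in closed form, the pointwise limit $e^{-t^2 x/2}$ is immediate from a second-order Taylor expansion, and the uniform bound $|g_\rho(t,x)|\le 1$ makes dominated convergence applicable without any moment assumption on $X$ beyond what is already granted.  One should also remark that the limit law $\mathcal{N}(0,X)$ is to be read as the normal variance mixture, i.e.\ the law with characteristic function $t\mapsto \E(e^{-t^2X/2})$, and that it degenerates appropriately on the event $\{X=0\}$; this is worth a sentence in the final write-up so that the statement of Theorem~\ref{the1} is unambiguous.
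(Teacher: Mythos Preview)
Your proof is correct and follows essentially the same route as the paper: condition on $X$, use the transform of the ordinary Poisson law, Taylor-expand the exponent, and identify the limit as the normal variance mixture; the convergence in probability via Chebyshev and the moment convergence via the Stirling-number identity are likewise the same. The only difference is that the paper works with the moment generating function while you use the characteristic function, which makes your passage to the limit cleaner (the uniform bound $|g_\rho(t,x)|\le 1$ gives dominated convergence for free), whereas the paper simply writes $(1+o(1))$ without spelling out the justification.
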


\begin{remark}
The result is classical for degenerate constant random variable $X=1$, such that $\MPo(\rho X)=\Po(\rho)$, leading to the classical central limit theorem for the Poisson distribution for its rate parameter tending to infinity.
\end{remark}

\begin{remark}[Shifting of $Y=\MPo(\rho X)$]
Concerning the shift of $Y/\rho$ by the random variable $X$ in the theorem, it is crucial to shift by the random variable $X$ used in the 
construction of $Y=\MPo(\rho X)$, $\P\{Y=\ell\}= \frac{\rho^{\ell}}{\ell!}\E( X^{\ell}e^{-\rho X})$. 
Using just any random variable $X^{\ast}$, with the same distribution $X^{\ast}\law X$, but $X\neq X^{\ast}$, is not sufficient.
Taking for example an independent copy $X^{\ast}$ of $X$ leads to a different second moment and the conclusion of the theorem is not valid anymore. For example
\[
\E(X^{\ast}Y)=\E\big(\E(X^{\ast}Y|X)\big)
=\E\big(X^{\ast}\cdot \rho X\big)
=\E\big(X^{\ast}\big)\cdot \E(\rho X)
=\rho \mu_1^2.
\]
in contrast to $\E(XY)=\rho\mu_2$, as derived later.
\end{remark}

\begin{remark}[Point mass at zero]
Assume that the random variable $X$ is a mixture of a point mass at zero 
$\P\{X=0\}=p>0$ and an absolutely continuous part supported on $(0,\infty)$. Then, the limit law for $\rho$ tending to infinity also has a point mass at zero: the case $X=0$ corresponds to a degenerated variance of $\mathcal{N}(0,X)$. 
This is reflected by the simple fact that for any $\rho>0$ it holds
\[
\P\{\sqrt{\rho}(\frac{Y}{\rho}-X)=0\}\ge p,
\]
such that the limit law must have a point mass at zero with probability at least $p$.
\end{remark}

Interestingly, also the raw moments can be analyzed. We can strengthen the conclusion of Theorem~\ref{the1} as follows,
which also shows moment convergence for the ordinary Poisson law in degenerate case $X=1$.
\begin{theorem}
\label{the2}
Assume that the moment sequence $(\mu_s)_{s\in\N}$ determines a unique distribution. Given a random variable $X$, with $\E(X^s)=\mu_s$, $s\in\N$ and a mixed Poisson distributed random variable $Y\law \MPo(\rho X)$. The raw moments of $\sqrt{\rho}\cdot\big(\frac{Y}{\rho}-X\big)$ tend to the moments of the normal distribution $\mathcal{N}(0,X)$, 
with the random variable $X$ determined by its moment sequence $(\mu_s)$:
\[
\E(\sqrt{\rho}^s\cdot\big(\frac{Y}{\rho}-X\big)^s)\xrightarrow[\rho\to\infty]{}
\begin{cases}
(2m-1)!!\cdot \mu_m,\quad s=2m,\\
0,\quad s=2m+1,
\end{cases}
\]
with $m\ge 0$. Additionally, the raw moments $m_s=\E\big((Y-\rho X)^s\big)$ of $Y-\rho X$ are given by the explicit expression
\[
m_s=\sum_{k=0}^{s}\rho^k\mu_k S_2(s,k),
\]
where $S_2(s,k)$ denotes the number of partitions of a set of size $s$ into $k$ subsets of size at least two. 
\end{theorem}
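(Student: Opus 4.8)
The plan is to reduce both assertions to the classical computation of the centered moments of an ordinary Poisson law, using that conditionally on $\{X=x\}$ the variable $Y$ is $\Po(\rho x)$-distributed. Writing $P\law\Po(\lambda)$ with $\lambda=\rho x$, the variable $Y-\rho X$ becomes, after conditioning on $X$, the centered Poisson variable $P-\lambda$. The first step is to establish the identity
\[
\E\big((P-\lambda)^s\big)=\sum_{k=0}^{s}S_2(s,k)\,\lambda^k ,
\]
with $S_2(s,k)$ the number of set partitions of an $s$-element set into $k$ blocks each of size at least two. This is a polynomial identity in $\lambda$; I would derive it from the moment generating function $\E\big(e^{t(P-\lambda)}\big)=\exp\!\big(\lambda(e^t-1-t)\big)$ together with the exponential formula: since $e^t-1-t=\sum_{j\ge2}t^j/j!$ is the exponential generating function of a single block of size $\ge2$, the coefficient of $t^s/s!$ in $\exp\!\big(\lambda(e^t-1-t)\big)$ is exactly $\sum_{k=0}^{s}\lambda^k S_2(s,k)$.

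Next I would integrate this identity against the law of $X$. The right-hand side above is a polynomial in $\lambda$ with nonnegative coefficients, and $X$ has finite moments of every order, so $\E\big(|Y-\rho X|^s\mid X\big)$ is dominated by a polynomial in $\rho X$ and hence integrable; Fubini then yields
\[
m_s=\E\big((Y-\rho X)^s\big)=\E\Big(\sum_{k=0}^{s}S_2(s,k)(\rho X)^k\Big)=\sum_{k=0}^{s}\rho^k\mu_k\,S_2(s,k) ,
\]
which is the explicit expression claimed in the theorem.

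For the limiting statement, observe that the $s$-th raw moment of $\sqrt{\rho}\,\big(\frac{Y}{\rho}-X\big)$ equals $\rho^{-s/2}m_s=\sum_{k=0}^{s}S_2(s,k)\mu_k\,\rho^{\,k-s/2}$, so only the largest index $k$ with $S_2(s,k)\neq0$ matters. Since every block has size at least two, this largest index is $\lfloor s/2\rfloor$. For $s=2m$ the extremal partition is a perfect matching into $m$ pairs, of which there are $(2m-1)!!$, so $S_2(2m,m)=(2m-1)!!$; the corresponding term equals $(2m-1)!!\,\mu_m$ and carries $\rho^0$, while every remaining term carries a strictly negative power of $\rho$ and vanishes, giving the limit $(2m-1)!!\,\mu_m$. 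For $s=2m+1$ every surviving term carries $\rho^{\,k-m-1/2}$ with $k\le m$, hence a strictly negative exponent, and the sum tends to $0$. These are precisely the moments of $\mathcal N(0,X)$, namely $\E\big(\mathcal N(0,X)^{2m}\big)=(2m-1)!!\,\E(X^m)$, so this moment convergence complements the distributional limit of Theorem~\ref{the1}.

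The only points needing genuine care are the centered Poisson moment identity and the interchange of expectation with the finite sum over $k$; both are routine — the former classical, the latter justified by the reduction to finitely many integrable terms — so no truncation or uniform-integrability argument enters, and the argument is technically lighter than the proof of Theorem~\ref{the1}.
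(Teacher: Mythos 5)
Your proposal is correct, and its overall architecture coincides with the paper's: condition on $X$ so that $Y-\rho X$ becomes a centered $\Po(\rho X)$ variable, compute its centered moments as a polynomial in the parameter, apply the tower property, and extract the dominant power of $\rho$ after dividing by $\rho^{s/2}$. Where you differ is in how the centered Poisson moment polynomial is obtained. The paper derives the recurrence $m_s(x)=x\sum_{k=0}^{s-2}\binom{s-1}{k}m_k(x)$ by a discrete summation-by-parts argument, reads off the degree bound $\lfloor s/2\rfloor$ and the leading coefficient $(2m-1)!!$ by induction on the recurrence, and only afterwards states the closed form $\sum_{k=0}^{s}x^kS_2(s,k)$ as obtained by ``unwinding'' the recurrence, with a citation to Privault. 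You instead get the closed form in one stroke from the moment generating function $\E\bigl(e^{t(P-\lambda)}\bigr)=\exp\bigl(\lambda(e^t-1-t)\bigr)$ and the exponential formula, noting that $e^t-1-t$ is the exponential generating function of a single block of size at least two; the degree bound and the identification $S_2(2m,m)=(2m-1)!!$ (perfect matchings) then come for free from the combinatorics, exactly as the paper also observes. Your route is more self-contained and arguably cleaner, since the explicit $S_2$ formula and the asymptotics both fall out of a single identity; the paper's recurrence route is more elementary in that it avoids generating-function machinery and justifying that the Taylor coefficients of the MGF are the moments (harmless here, since the Poisson MGF is entire). Your handling of the Fubini/tower step is adequate: the conditional moment is a polynomial in $\rho X$ with nonnegative coefficients in the relevant bound, and all moments of $X$ are finite by hypothesis.
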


\begin{proof}[Proof of Theorem~\ref{the1}]
First, we establish the moment convergence of $Y/\rho$. We recall the following basic properties~\cite{KuPa2014}, connecting the raw moments and the factorial moments in terms
of the Stirling numbers of the second kind $\Stir{s}{j}$,
\[
\E(\fallfak{Y}{s})=\rho^s\E(X^s),\quad
\E(Y^s)=\sum_{j=0}^{s}\Stir{s}{j}\rho^j\E(X^j).
\]
We obtain the identity
\[
\frac1{\rho^s}\E(Y^s)=\E(X^s)+\sum_{j=0}^{s-1}\Stir{s}{j}\frac{\E(X^j)}{\rho^{s-j}},
\]
which implies that $\frac1{\rho^s}\E(Y^s)\sim \E(X^s)$ for $\rho$ tending to infinity. Thus, the (raw) moments of $Y$ converge to the moments of $X$. This established convergence in distribution by the Fr\'echet-Shohat moment convergence theorem~\cite{FrSh1931} and the assumption. 

\smallskip

Concerning convergence in probability, we proceed by using Chebyshev's inequality.
Let $W=\frac{Y}\rho -X$, such that $\E(W)=0$. This implies that, $\V(W)=\E(W^2)$. For any real $a>0 $ we have
\[
\P\{|W-\E(W)|\ge a\}=\P\{|W|\ge a\}\le \frac{\E(W^2)}{a^2}.
\]
The second moment and thus the variance is obtained as follows:
\begin{align*}
\E(W^2)
&=\E\big((\frac{Y}\rho -X)^2\big)
=\frac1{\rho^2}\E(Y^2) - 2\rho\E(XY) + \E(X^2)\\&
=\frac1{\rho^2}\big(\rho^2\mu_2+\rho\mu_1\big)+\mu_2-2\rho\E(XY).
\end{align*}
We use the tower property of expectation to obtain 
\[
\E(XY)=\E\big(\E(XY|X)\big)
=\E\big(X\cdot \rho X\big)=\rho \mu_2.
\]
Consequently, we obtain after simplification the result
\[
\E(W^2)=\frac{\mu_1}{\rho},
\]
leading to the inequality
\[
\P\{|W|\ge a\}\le \frac{\mu_1}{\rho a^2}.
\]
This proves the convergence in probability for $\rho$ tending to infinity.
An alternative very similar argument for the convergence in probability is given by~\cite{Sephorah}.

\smallskip

Next, we determine the limit law of $\sqrt{\rho}\cdot W=\sqrt{\rho}\cdot\big(\frac{Y}{\rho}-X\big)$ and
to the moment generating function. We obtain
\begin{align*}
\E(e^{\sqrt{\rho}tW})&=\E\Big(e^{\frac{tY}{\sqrt{\rho}}}\cdot e^{-\sqrt{\rho}Xt}\Big)
=\E\bigg(e^{-\sqrt{\rho}Xt}\E\Big(e^{\frac{tY}{\sqrt{\rho}}}\big| X\Big)\bigg).
\end{align*}
We condition on $X$ and use the moment generating function of an ordinary Poisson-distributed random variable. This leads to 
\begin{align*}
\E(e^{\sqrt{\rho}t W})&=\E\Big(e^{-\sqrt{\rho}X t} \cdot \exp\Big(\rho X\cdot(e^{t/\sqrt{\rho}}-1)\Big) \Big).
\end{align*}
Expansion of the exponential gives for $\rho\to\infty$ the asymptotics
\[
e^{t/\sqrt{\rho}}-1=\frac{t}{\sqrt{\rho}}-\frac{t^2}{2\rho}+\mathcal{O}(\rho^{-3/2}).
\]
Consequently, we get
\[
\E(e^{\sqrt{\rho}t W})=\E\bigg(e^{-\sqrt{\rho}X t} \cdot \exp\Big(X t\sqrt{\rho}-\frac{t^2\cdot X}{2}+\mathcal{O}(\rho^{-1/2})\Big)\bigg)
=\E\Big(\exp\big(\frac{t^2\cdot X}{2}\big)\Big)(1+o(1)).
\]
It remains to identify this moment generating function as the moment generating function of a normal variance mixture $M=\mathcal{N}(0,X)$. We observe that
\[
\E\big(\exp(t M)\big)=\E\Big(\E\big(\exp(t M)\big)\big| X\Big)=
\E\Big(e^{X t^2 /2}\Big),
\]
which proves the stated assertion.
\end{proof}

\begin{proof}[Proof of Theorem~\ref{the2}]
Let $m_{s}(\rho X)=\E\big((Y-\rho X)^s | X\big)$. We note that $m_{s}(\rho X)$ are simply the moments of a Poisson distributed random variable $P=\Po(x)$ with parameter $x=\rho X$. The centered moments 
\[
m_s(x)=\E\big((P-x)^s\big)
\]
satisfy~\cite{Kendall,Ruzankin} the recurrence relation
\begin{equation}
\label{recCentPoisson}
m_s(x) = x\sum_{k=0}^{s-2}\binom{s-1}{k}m_k(x),
\end{equation}
$s\ge 2$, with initial values $m_1(x)=0$ and $m_2(x)=x$. 
For the reader's convenience we include the short derivation of the recurrence relation~\eqref{recCentPoisson}, 
\begin{align*}
\label{recCentPoisson}
m_s(x) &= \E\big((P-x)^s\big) =e^{-x}\sum_{\ell\ge 0}\frac{x^\ell}{\ell!}\big(\ell-x\big)^s
=e^{-x}\sum_{\ell\ge 0}\frac{x^\ell(\ell-x)}{\ell!}\big(\ell-x\big)^{s-1}\\
&=e^{-x}x\sum_{\ell\ge 0}\frac{x^{\ell}}{\ell!}\big((\ell+1-x)^{s-1}-(\ell-x)^{s-1}\big)\\
&=e^{-x}x\sum_{\ell\ge 0}\frac{x^{\ell}}{\ell!}\big(\sum_{k=0}^{s-1}\binom{s-1}{k}(\ell-x)^k-(\ell-x)^{s-1}\big)\\
&=e^{-x}x\sum_{\ell\ge 0}\frac{x^{\ell}}{\ell!}\sum_{k=0}^{s-2}\binom{s-1}{k}(\ell-x)^k
=x\sum_{k=0}^{s-2}\binom{s-1}{k}m_k(x),
\end{align*}
for $s\ge 2$. We note in passing that 
\[
m_s(x)=\sum_{k=0}^{s}\binom{s}{k}(-1)^{s-k}T_k(x)x^{s-k},
\]
where $T_s(x)=\sum_{\ell=0}^{s}\Stir{s}\ell x^{\ell}$ denote the Touchard polynomials.
By induction, the degree in $x$ of $m_{s}(x)$ equals $s=s/2$ or $(s-1)/2$ according to the parity of $s$. 
Moreover, we observe that the leading coefficient of $x$ for even $s=2m$ is given by $(2m-1)(2m-3)\dots 1=(2m-1)!!$.
This implies that for $m_{s}(X)$ we have
\[
m_{s}(\rho X) = \rho X\cdot \sum_{k=0}^{s-2}\binom{s-1}{k}m_{k}(X),
\]
$s\ge 2$, with initial values $m_{1}(X)=0$ and $m_{2}(X)=\rho X$. By the tower property of expectation this leads for $s=2m$ to
\[
m_{2m} = \E(m_{2m}(X))= (2m-1)!!\rho^m\E(X^m)+o(\rho^m),
\]
whereas for $s=2m+1$ we have $m_{2m+1}=\mathcal{O}(\rho^m)$. This implies that the stated moment convergence
of $\sqrt{\rho}\cdot\big(\frac{Y}{\rho}-X\big)=\frac{1}{\sqrt{\rho}}(Y-\rho X)$. 
Finally, we note that unwinding the recurrence relation~\eqref{recCentPoisson}
leads to a closed form representation
\[
\mu_s(x)=\sum_{k=0}^{s}x^k S_2(s,k),
\]
which was pointed out by Privault~\cite{Privault}. Here, $S_2(s,k)$ denotes the number of partitions of a set of size $s$ into $k$ subsets of size at least 2. The combinatorial properties of $S_2(s,k)$ also imply the degree bound, as well as the leading coefficient,
as 
\[
S_2(s,k)=\sum_{0=j_1\ll\dots\ll j_{k+1}=s}\, \prod_{\ell=1}^{k}\binom{j_{\ell+1}-1}{j_\ell}
\]
where $a \ll b$ means $a < b - 1$. For $s=2m$ and $k=m$ we have only a single summand and the factors are given by $(s-1)(s-3)\dots 1$.
We also refer the interested reader to~\cite{Privault} for another short simple proof of the representation of $\mu_s$ based on cumulants
and

By the tower property of expectation this leads to 
\[
m_s=\E(m_s(X))=\sum_{k=0}^{s}\rho^k \E(X^k)S_2(s,k),
\]
which is the desired result.
\end{proof}

Finally, we briefly discuss multivariate Mixed Poisson Distributions~\cite{FLT2004}.
\begin{defi}
\label{MOMSEQdef2}
Let $\mathbf{X}=(X_1,\dots,X_m)$ denote a random vector with non-negative components and cumulative distribution function $\boldsymbol{F}(.)$ and $\rho_1,\dots \rho_m>0$ scale parameters. 
The discrete random vector $\mathbf{Y}=(Y_1,\dots,Y_m)$ with joint probability mass function given by  
\begin{align*}
\P\{Y_1=\ell_1,\dots Y_m=\ell_m\}=\frac{\rho_1^{\ell_1}\dots\rho_m^{\ell_m}}{\ell_1!\dots\ell_m!}
\int_{(\R^+)^{m}}X_1^{\ell_1}\dots X_m^{\ell_m}e^{-\sum_{j=1}^{m}\rho_jX_j}d\boldsymbol{F}
\end{align*}
with integers $\ell_1,\dots,\ell_m\ge 0$, has a multivariate mixed Poisson distribution with mixing distribution $(X_1,\dots,X_m)$ and real scale parameters $\rho_1,\dots,\rho_m\ge 0$.
\end{defi}
A multivariate extension of the simple limit theorem of Lemma~\ref{MOMSEQMainLemma} was given in~\cite{BKW2022}. 
The conclusions of Theorem~\ref{the1} can be readily extended to the multivariate case. 
Let $W_j=Y_j-\rho_j X_j$. We note that given $\mathbf{X}$, the distribution of $\mathbf{Y}$ reduces to $m$ ordinary independent Poisson random variables.
In particular, the covariance of $W_jW_k$ can be obtained as follows:
\begin{align*}
\E(W_j W_k|\mathbf{X})&= \E\big(Y_jY_k - \rho_jX_jY_k -\rho_kX_kY_j + \rho_j\rho_kX_jX_j|\mathbf{X}\big)\\
&=2\rho_j\rho_kX_jX_j-2\rho_j\rho_kX_jX_j=0.
\end{align*}
such that $\E(W_j W_k)= 0$. Concerning a limit law for $\min\{\rho_1,\dots,\rho_m\}\to\infty$, we proceed as follows.
We study the random vector $\mathbf{W}=(W_1,\dots,W_m)$ and proceed as before, turning to the moment generating function.
\begin{align*}
\E(e^{\mathbf{t}^T \mathbf{W}})&=
\E\bigg(\exp\Big(-\sum_{k=1}^m t_k  X_k+ \sum_{k=1}^m t_k\frac{Y_k}{\rho_k}\Big)\bigg)\\
&=\E\bigg(e^{-\sum_{k=1}^m t_k X_k}\E\Big(e^{\sum_{k=1}^m t_k\frac{Y_k}{\rho_k}}\big| \mathbf{X}\Big)\Big).
\end{align*}
Conditioning on $\mathbf{X}$, we use the moment generating function of an ordinary Poisson-distributed random variable,
as in the one-dimensional case. We note further that given $\mathbf{X}$, the mixed Poisson distribution consists of $m$ independent Poisson distributions. This leads to 
\begin{align*}
\E(e^{\mathbf{t}^T \mathbf{W}})&=\E\bigg(e^{-\sum_{k=1}^m t_k X_k} \cdot \exp\Big(\sum_{k=1}^{m}\rho_k X_k\cdot(e^{t_k/\rho_k}-1)\Big) \bigg).
\end{align*}
Expansion of the exponentials gives for $1\le k\le m$
\[
e^{t_k/\rho_k}-1=\frac{t_k}{\rho_k}-\frac{t_k^2}{2\rho_k}+\mathcal{O}(\rho_k^{-3/2}).
\]
Consequently, turning to $\boldsymbol{\rho}\mathbf{W}=(\sqrt{\rho_1}W_1,\dots,\sqrt{\rho_m}W_m)$, we obtain
\[
\E(e^{\mathbf{t}^T \boldsymbol{\rho}\mathbf{W}})=
\E\bigg(\exp\Big(\sum_{k=1}^{m}-\frac{t_k^2}{2}\Big) \bigg)(1+o(1)),
\]
which is the moment generating function of a multivariate normal mixture distribution, 
when covariance matrix equal to the diagonal matrix $\text{diag}(X_1,\dots,X_k)$. Similar result
can be shown if only certain scaling parameters $\{\rho_1,\dots,\rho_m\}$ tend to infinity.

\section{Conclusion and Acknowledgments}
We obtained fundamental results for mixed Poisson distributions, including an expression for the centered moments, 
as well as a central limit theorem, when the scale parameter tends to infinity. Despite the simplicity and fundamental nature of the results, we were unable to
find it in the literature.

Beside the natural interest in the basic properties
of mixed Poisson distributions, we aim to refine the limit laws of~\cite{KuPa2014,BKW2022} for combinatorial structures, at least in a few interesting cases such as the chinese restaurant process, in the spirit of Theorem~\ref{the1}.

\smallskip

The author warmly thanks Sephorah Mangin for raising the question about the type of convergence of mixed Poisson distributions
to their mixing distribution and subsequent discussions.

\bibliographystyle{siam}
\bibliography{MomSeqLimit-bib}{}

\begin{thebibliography}{10}

\bibitem{AdellCal1993}
{\sc J.~A. Adell and J.~de~la Cal}, {\em {On the uniform convergence of
  normalized {P}oisson mixtures to their mixing distribution}}, {Statistics \&
  Probability Letters}, 18 (1993), pp.~227--232.

\bibitem{BanderierKubaWagnerWallner2024}
{\sc C.~Banderier, M.~Kuba, S.~Wagner, and M.~Wallner}, {\em {Composition
  schemes: $q$-enumerations and phase transitions in Gibbs models}}, in 35rd
  International Conference on Probabilistic, Combinatorial and Asymptotic
  Methods for the Analysis of Algorithms (AofA 2024), vol.~302 of Leibniz
  International Proceedings in Informatics (LIPIcs), 2024, pp.~23:1--23:17.

\bibitem{BKW2022}
{\sc C.~Banderier, M.~Kuba, and M.~Wallner}, {\em Phase transitions of
  composition schemes: Mittag-leffler and mixed poisson distributions}, Annals
  of Applied Probability, 34 (2024), pp.~4635--4693.

\bibitem{Dubourdieu1939}
{\sc J.~Dubourdieu}, {\em Les principes fondamentaux du calcul des
  probabilités et la théorie de l'assurance-maladie}, Gauthier-Villars, 1939.

\bibitem{FLT2004}
{\sc A.~Ferrari, G.~Letac, and J.-Y. Tourneret}, {\em Multivariate mixed
  {P}oisson distributions}, in {12th European Signal Processing Conference,
  IEEE}, 2004, pp.~1067--1070.

\bibitem{FrSh1931}
{\sc M.~Fr\'echet and J.~Shohat}, {\em A proof of the generalized second limit
  theorem in the theory of probability}, Trans. Amer. Math. Soc., 33 (1931),
  pp.~533--543.

\bibitem{Grandell1997}
{\sc J.~{Grandell}}, {\em {Mixed Poisson Processes}}, Chapman \& Hall, 1997.

\bibitem{JohnsonKotz}
{\sc N.~L. Johnson, A.~W. Kemp, and S.~Kotz}, {\em Univariate Discrete
  Distributions}, John Wiley \& Sons, 2005.

\bibitem{Karlis}
{\sc D.~Karlis and E.~Xekalaki}, {\em Mixed poisson distributions},
  International Statistical Review, 73 (2005), pp.~35--58.

\bibitem{Kendall}
{\sc M.~G. Kendall}, {\em The Advanced Theory of Statistics}, vol.~1, London:
  Griffin, 1943.

\bibitem{KuPa2014}
{\sc M.~Kuba and A.~Panholzer}, {\em On moment sequences and mixed {P}oisson
  distributions}, Probab. Surv., 13 (2016), pp.~89--155.

\bibitem{Sephorah}
{\sc S.~Mangin}, {\em Personal communication}.
\newblock Personal communication, 2025.

\bibitem{MasseTheodorescu2005}
{\sc J.-C. Mass\'e and R.~Theodorescu}, {\em Neyman type {A} distribution
  revisited}, Statist. Neerlandica, 59 (2005), pp.~206--213.

\bibitem{Neyman1939}
{\sc J.~Neyman}, {\em On a new class of ``contagious'' distributions applicable
  in entomology and bacteriology}, Ann. Math. Stat., 10 (1939), pp.~35--57.

\bibitem{Privault}
{\sc N.~Privault}, {\em Generalized bell polynomials and the combinatorics of
  poisson central moments}, Electronic Journal of Combinatorics, 18 (2011).
\newblock Paper 54.

\bibitem{Ruzankin}
{\sc P.~S.Ruzankin}, {\em On absolute central moments of poisson distribution},
  Journal of Statistical Theory and Practice, 14 (2020).
\newblock Article number 56.

\bibitem{Stufler2022}
{\sc B.~Stufler}, {\em Gibbs partitions: a comprehensive phase diagram}, Ann.
  Inst. Henri Poincar{\'e}, Probab. Stat.,  (2024 (to appear)).

\bibitem{Willmot}
{\sc G.~E. Willmot}, {\em Mixed compound {P}oisson distributions}, ASTIN
  Bulletin, 16 (1986), pp.~S59--S79.

\end{thebibliography}

\end{document}